\documentclass{amsart}

\usepackage{epsfig}
\usepackage{amsmath}
\usepackage{amssymb}
\usepackage{amscd}
\usepackage{graphicx}
\usepackage{color}
\usepackage{verbatim}
\usepackage{dsfont}
\usepackage[symbol*]{footmisc}

\DefineFNsymbols*{asterisks}{{$\star$}{$\dagger$}{$\ddagger$}{$\dagger\dagger$}{$\ddagger\ddagger$}}
\setfnsymbol{asterisks}

\newtheorem{thm}{Theorem}[section]
\newtheorem{lem}[thm]{Lemma}

\newtheorem{fact}[thm]{Fact}

\newtheorem{defn}[thm]{Definition}

\theoremstyle{remark}

\def \N {\mathbb N}

\def \G {\mathcal G}
\def \F {\mathcal F}

\def \R {\mathbb R}

\def \M {\mathcal M}
\def \P {\mathbb P}

\def \sq {sequence}
\def \xt {$(X,T)$}

\def \tl {topological}
\def \im {invariant measure}

\def \ds {dynamical system}

\def \mmu {\mu}

\numberwithin{equation}{section}

\title[A strictly ergodic subshift uncorrelated to the M\"obius function]{A strictly ergodic, 
positive entropy subshift uniformly uncorrelated to the M\"obius function}
       
\author{Tomasz Downarowicz and Jacek Serafin}

\address{Faculty of Pure and Applied Mathematics, Wroc{\l}aw University of Science and Technology,
Wybrze\.ze Wyspia\'nskiego 27, Wroc{\l}aw 50-370, Poland, \rm downar@pwr.edu.pl ,\break serafin@pwr.edu.pl .}

\subjclass[2010]{Primary: 37B05; Secondary: 37B10, 37A35, 11Y35.}
\keywords{Correlation with a \sq, inverse Sarnak's conjecture, positive entropy, strict ergodicity}

\thanks{The research is supported by the NCN (National Science Center, Poland) grant 2013/08/A/ST1/00275,
and by Wroc\l aw University of Science and Technology, grant 0401/0155/18.}

\begin{document}
\begin{abstract}
A recent result \cite{DS} shows that there exist positive entropy subshifts 
satisfying the assertion of Sarnak's conjecture \cite{sarnak}.
More precisely, it is proved that if $y=(y_n)_{n\ge 1}$ 
is a bounded sequence with zero average along every infinite arithmetic progression 
(the M\"obius function is an example of such a \sq\ $y$) then for every $N\ge 2$ 
there exists a subshift $\Sigma$ over $N$ symbols, with entropy arbitrarily close to 
$\log N$, uncorrelated to $y$.  

In the present note, we improve the result of \cite{DS}. First of all, we observe that the uncorrelation obtained in \cite{DS} is \emph{uniform}, i.e., for any continuous function $f:\Sigma\to\R$ and every $\epsilon>0$ there exists $n_0$ such that for any $n\ge n_0$ and any $x\in\Sigma$ we have
$$
\left|\frac1n\sum_{i=1}^{n}f(T^ix)\,y_i\right|<\epsilon.
$$
More importantly, by a fine-tuned modification of the construction from \cite{DS}
we create a \emph{strictly ergodic} subshift, with all the desired properties of the example in \cite{DS} (uniformly uncorrelated to $y$ and with entropy arbitrarily close to $\log N$).

The question about these two additional properties (uniformity of uncorrelation and strict ergodicity) has been posed by Mariusz Lema\' nczyk in the context of the so-called strong MOMO (M\"obius Orthogonality on Moving Orbits) property. Our result shows, among other things, that strong MOMO is essentially stronger than uniform uncorrelation, even for strictly ergodic systems.
\end{abstract}
\maketitle

\numberwithin{equation}{section}

\section{Preliminaries}
Let $y$ be a bounded, real-valued \sq\ with zero average along every infinite arithmetic progression, 
i.e., satisfying, for every $t\ge 1$ and $l\ge 0$, the condition 
\begin{equation}\label{arp}
\lim_n \frac1n\sum_{i=1}^n y_{it+l} = 0.
\end{equation}
A sequence as above we call \emph{aperiodic}. Clearly we may (and will) assume that $|y_n|\le 1$ for all $n$. 
An important example of an aperiodic sequence is: 
$$
\mmu_n=\begin{cases}
\phantom{-}1&\text{for $n=1$,}\\
\phantom{-}(-1)^r&\text{if $n$ is a product of $r$ distinct primes,}\\
\phantom{-}0& \text{otherwise (i.e., if $n$ has a repeated prime factor),}
\end{cases}
$$
called the M\"obius function $y=\mmu$ (see e.g. \cite{sarnak}). 

By a \emph{\tl\ \ds} we will mean a pair \xt\ where $X$ is a compact metric space and $T:X\to X$ is a continuous transformation. Uncorrelation between a system and a \sq\ will be understood as follows:

\begin{defn}\label{def1}
We say that \xt\ is \emph{uncorrelated} to $y$ if for each continuous function $f:X\to\R$ and every $x\in X$, we have 
$$
\lim_n\frac 1n\sum_{i=1}^n f(T^ix)y_i = 0.
$$
If, for each fixed function $f$, the above convergence to $0$ is uniform on $X$ then we will say that \xt\ is \emph{uniformly uncorrelated} to $y$.
\end{defn}

Let $\Lambda$ be a fixed finite alphabet. By a \emph{subshift} $\Sigma$ we will mean any closed and shift-invariant subset of $\Lambda^{\N}$. A subshift becomes a \ds\ when regarded together with the action of the shift map. We denote by $\M(\Sigma)$ the collection of all shift-invariant Borel probability measures supported by $\Sigma$; this set is non-empty and compact in the weak-star topology with the distance between measures given by:
\begin{equation}\label{metr}
d(\mu, \nu)=\sum_{n=1}^{\infty}\frac{1}{2^n}\sum_{D\in\Lambda^n}|\mu(D)-\nu(D)|,
\end{equation}
where $\Lambda^n$ denotes the family of blocks of length $n$ over $\Lambda$, identified with the corresponding cylinder sets. Since, for any $n'<n$, any cylinder corresponding to a block of length $n'$ is a disjoint union of cylinders corresponding to blocks of length $n$, for proving that two measures $\mu$ and $\nu$ are close it suffices to find just one large enough integer $n$ and small enough $\theta>0$ such that $|\mu(D)-\nu(D)|<\theta$ for all blocks $D$ of length $n$.

If $D=(d_1,\dots,d_n)$ and $C=(c_1,\dots,c_n)$ are finite sequences (blocks) of real numbers of the same length, we define their correlation as the average
$$
\overline{DC}=\frac1n\sum_{i=1}^nd_ic_i.
$$
If $C=(c_1,\dots,c_n)$ and $D$ is a block of length $n'$ where $n'\leq n$ then the frequency of the occurrence of $D$ in $C$ is
$$
\mathsf{freq}_C(D):=\frac1n{\#\{i=1,\ldots,n-n'+1: c_i\ldots c_{i+n'-1}=D\}}.
$$

\section{The main result}

In \cite{DS} we have proved:
\begin{thm}\label{thm1}
Let $y$ be an aperiodic sequence and let $N\ge 2$ be a fixed integer. 
There exists a subshift $\Sigma$ over $N$ symbols of \tl\ entropy arbitrarily close to $\log N$, uncorrelated to $y$.
\end{thm}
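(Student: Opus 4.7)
The plan is to construct $\Sigma$ by a hierarchical block-concatenation procedure. Fix $\epsilon>0$, choose rapidly growing block lengths $L_1<L_2<\cdots$ and tolerances $\epsilon_k\searrow 0$, and build families $\F_k\subset \Lambda^{L_k}$ with $|\F_k|\ge N^{L_k(1-\epsilon_k)}$ in such a way that every block of $\F_{k+1}$ is a concatenation of $L_{k+1}/L_k$ blocks from $\F_k$. The subshift $\Sigma$ will be the set of $x\in\Lambda^{\N}$ every finite sub-word of which appears inside some block of some $\F_k$; this is automatically closed and shift-invariant, and the exponential growth of $|\F_k|$ immediately gives $\htop(\Sigma)\ge \log N-\epsilon$.

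The combinatorial core is a \emph{block-selection lemma}: given $\eta>0$, $m\ge 1$, and $L$ sufficiently large, all but a vanishing fraction of blocks $C\in \Lambda^L$ satisfy
$$
\left|\frac1L\sum_{i=1}^{L-m+1}\mathbf 1\bigl[C_{[i,i+m-1]}=W\bigr]\,y_{i+s}\right|<\eta
$$
for every word $W\in\Lambda^m$ and every integer shift $s$ in a prescribed (polynomially bounded) range. This is proved by a second-moment argument: when $C$ is drawn uniformly from $\Lambda^L$, the expectation of the inner sum equals $N^{-m}\cdot\frac1{L}\sum_i y_{i+s}$, which tends to $0$ uniformly in $s$ by the aperiodicity condition (\ref{arp}); independence of the window indicators at positions separated by more than $m$ yields variance $O(1/L)$, and Chebyshev together with a union bound over the $N^m\cdot\mathrm{poly}(L)$ pairs $(W,s)$ eliminates only a negligible fraction of blocks.

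With this lemma the hierarchy is constructed inductively: after $\F_k$ is fixed, consider all $|\F_k|^{L_{k+1}/L_k}\ge N^{L_{k+1}(1-\epsilon_k)}$ concatenations of $\F_k$-blocks of total length $L_{k+1}$, and retain those which are good at scale $L_{k+1}$, tolerance $\epsilon_{k+1}$, and window $m_{k+1}$; if $L_{k+1}$ is chosen large enough, the surviving family still has size $\ge N^{L_{k+1}(1-\epsilon_{k+1})}$. For the uncorrelation conclusion, fix a continuous $f\colon\Sigma\to\R$ and approximate $f$ in sup-norm within $\delta$ by a cylinder function $f_m$, a finite linear combination of indicators $\mathbf 1_{[W]}$ with $|W|\le m$. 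For any $x\in\Sigma$ and any $n$, pick $k$ with $L_k\gg n$; by construction $x_{[1,n+m]}$ is a sub-word at some position $s$ of some block $B\in\F_k$, and applying the block-selection property of $B$ term by term bounds $\bigl|\tfrac1n\sum_{i=1}^{n}f_m(T^ix)y_i\bigr|$ by $\kappa_m\,\epsilon_k$, so the $\limsup$ in $n$ is at most $\kappa_m\,\epsilon_k+\delta$; sending $k\to\infty$ and then $\delta\to 0$ finishes.

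The principal obstacle is the \emph{shift-uniformity} built into the block-selection lemma: since a point of $\Sigma$ may begin at any position within an $\F_k$-block, the correlation estimate must hold uniformly over all starting positions $s$ of the sub-word relative to the surrounding block. This is precisely what forces the union bound over $s$ in the second-moment argument, and what couples the required size of $L_k$ to the parameters $\epsilon_k$ and $m_k$. A routine secondary subtlety is that a general $x\in\Sigma$ is only a limit of finite $\F_k$-concatenations, so boundary effects between successive constituent blocks (of size $O(L_{k-1})$ relative to the total $L_k$) must be absorbed into the error budget at each scale.
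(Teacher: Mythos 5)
The proposal follows the same broad strategy as the paper (hierarchical concatenation, a correlation test that a block must pass, and a counting argument for entropy), but the key lemma as you state it has a concrete error, and the way you deploy it is incompatible with the way you set it up.

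\textbf{The expectation claim is false.} You assert that $N^{-m}\cdot\frac1L\sum_{i} y_{i+s}\to 0$ \emph{uniformly in $s$ over a polynomially bounded range} by aperiodicity. Aperiodicity only gives, for each fixed $l$, that $\frac1n\sum_{i=1}^n y_{i+l}\to0$; writing $S_n=\sum_{i=1}^n y_i$, one has $\frac1L\sum_{i=1}^L y_{i+s}=\frac1L(S_{s+L}-S_s)$, and $S_n=o(n)$ yields uniformity only for $s=O(L)$. For $s$ as large as $L^2$ the bound is $o(L^2)/L$, which need not vanish. A concrete aperiodic $y$ on which your claim fails: put $y_i=1$ on $[a_k,a_k+L_k)$, $y_i=-1$ on $[a_k+L_k,a_k+2L_k)$, $y_i=0$ elsewhere, with $a_k=L_k^2$ and $L_k$ growing fast; then every $\frac1n\sum_{i=1}^n y_{it+l}\to0$, yet $\frac1{L_k}\sum_{i=1}^{L_k} y_{i+a_k}=1$ with $a_k=L_k^2$ polynomially bounded. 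The second-moment step then concentrates the correlation around a value that is \emph{not} small, so "all but a vanishing fraction of blocks" fails. The paper's test (R) avoids this by restricting the shift range to $j\le (m_k^2-1)N_k$ with $m_k$ growing extremely slowly, and, more importantly, by controlling the probability of passing via the inductive structure ((A)--(B)--(C) of Lemma 3.2 in \cite{DS}) rather than by a stand-alone expectation computation.

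\textbf{Scale mismatch in the application.} You choose $k$ so that $L_k\gg n$ and then invoke the block-selection property of the single block $B\in\F_k$ containing $x_{[1,n+m]}$. But your lemma controls $\frac1{L}\sum_{i=1}^{L-m+1}(\cdots)y_{i+s}$ averaged over the \emph{entire} block of length $L=L_k$, not over a sub-window of length $n\ll L_k$; a global average being small says nothing about averages over short sub-intervals. The paper goes the other way: it picks $k$ so that $n$ is comparable to (and in fact at least $(m-2)$ times) $N_k$, so $x_1^n$ decomposes into several \emph{complete} $\G_k$-blocks plus two short fragments, and the correlation test is applied to each complete block against the corresponding window of $y$, with the fragments absorbed into a $O(1/m)$ boundary error. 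A secondary gap is that your lemma is proved for $C$ uniform in $\Lambda^L$ yet applied to the exponentially thinner family of $\F_k$-concatenations; a Chebyshev bound of polynomial order is insufficient to control the fraction of bad blocks inside a set of relative density $N^{-\epsilon_k L}$, which is why the paper's argument has to be run inductively on the concatenation ensembles themselves.
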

A closer examination reveals that in fact we have obtained a stronger result: the above subshift $\Sigma$ is uniformly uncorrelated to $y$. We will justify this observation soon (see Fact \ref{fac} below).

The main goal of this short note is to strengthen the hypothesis and prove:
\begin{thm}\label{thm2}
Let $y$ be an aperiodic sequence and let $N\ge 2$ be a fixed integer. 
There exists a strictly ergodic subshift $\Sigma'$ over $N$ symbols of \tl\ entropy 
arbitrarily close to $\log N$, uniformly uncorrelated to $y$.
\end{thm}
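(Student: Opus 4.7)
The strategy is to revisit the hierarchical construction underlying Theorem \ref{thm1} and modify it so that, at every finite level of the hierarchy, the allowed concatenations of building blocks are forced to be combinatorially balanced. Recall that the construction of \cite{DS} proceeds by producing inductively in $k$ a family $\mathcal B_k$ of ``admissible'' blocks of length $L_k$ over $\{0,1,\dots,N-1\}$, large enough that $|\mathcal B_k|\ge N^{L_k(1-\delta_k)}$ (giving the entropy bound), and whose concatenations are controlled in their correlation with every window of the sequence $y$. The subshift $\Sigma$ of Theorem \ref{thm1} is obtained as the set of limit points of all such concatenations; strictifying it amounts to restricting which concatenations one is allowed to form.

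Concretely, I would replace the step ``allow all concatenations of blocks from $\mathcal B_k$'' by the following combinatorial rule: fix integers $R_k$ with $|\mathcal B_k|\mid R_k$, and declare a block of $\mathcal B_{k+1}$ (of length $L_{k+1}=R_kL_k$) to be a concatenation of $R_k$ blocks of $\mathcal B_k$ in which each element of $\mathcal B_k$ appears exactly $R_k/|\mathcal B_k|$ times. Let $\Sigma'$ be the set of limit points of all such \emph{balanced} concatenations across all levels. Because every block of $\mathcal B_k$ appears in every block of $\mathcal B_{k+1}$, the resulting subshift is minimal; because the empirical frequency of each block of $\mathcal B_k$ in any sufficiently long window of any $x\in\Sigma'$ is forced to a single value (the same at every position, up to errors $O(L_k/L_{k+1})$ going to $0$ as $k\to\infty$), the cylinders over all $\mathcal B_k$ have well-defined, point-independent frequencies, which by a standard argument yields unique ergodicity. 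Hence $\Sigma'$ is strictly ergodic.

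It remains to verify that the two quantitative features of \cite{DS} survive the restriction. For entropy, the number of balanced concatenations of $R$ elements of an $M$-letter alphabet is the multinomial $\binom{R}{R/M,\dots,R/M}$, which by Stirling is at least $M^R/\mathrm{poly}(R)$ provided $R\gg M\log R$. Choosing $R_k$ to grow sufficiently fast relative to $|\mathcal B_k|$, one obtains $|\mathcal B_{k+1}|\ge N^{L_{k+1}(1-\delta_{k+1})}$ with $\delta_k\to 0$, matching the entropy bound of Theorem \ref{thm1}. For uniform uncorrelation to $y$, one appeals to Fact \ref{fac}: the proof in \cite{DS} bounds the correlation $\overline{DC}$ of an arbitrary concatenation $D$ with the aligned window $C$ of $y$ by a quantity that depends only on the correlations of the individual admissible blocks, not on which concatenations are permitted. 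Restricting to balanced concatenations therefore neither worsens the bound nor disturbs its uniformity in the starting point $x\in\Sigma'$.

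The main technical obstacle is the numerological balancing act between the three mechanisms. The aperiodicity hypothesis on $y$ is exploited in \cite{DS} only through a bound of the form ``for each $L$ and $\eta$, all but an exponentially small proportion of blocks of length $L$ are $\eta$-uncorrelated to every window of $y$'', while the balanced-concatenation constraint discards only a polynomially small fraction of candidates. The challenge is to choose the parameter sequences $L_k$, $R_k$, $\delta_k$, and the tolerances $\eta_k$ coherently, so that at every level there exists a \emph{balanced} concatenation of blocks each of which is $\eta_k$-uncorrelated to $y$ at every shift. This careful bookkeeping is where the present construction must genuinely depart from the one of \cite{DS}, and it is the place the ``fine-tuned modification'' announced in the abstract does its work.
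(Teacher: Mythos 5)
Your plan is conceptually aligned with the paper's intent---force cylinder frequencies to be point-independent so that $\M(\Sigma')$ collapses to a single measure---but the \emph{exact} balancing mechanism you propose is genuinely different from the paper's, and it runs into a quantitative wall that the paper's device is specifically designed to avoid.

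The wall is the size of the multiplier. Your rule ``each element of $\B_k$ appears exactly $R_k/|\B_k|$ times'' is vacuous unless $R_k\ge|\B_k|$, and $|\B_k|$ is exponential in $L_k$. In the \cite{DS} construction the multiplier $m_k=N_k/N_{k-1}$ is kept small and grows to infinity \emph{extremely} slowly (long constancy intervals, unit jumps); this is not a cosmetic choice. The correlation test (R) slides a window of $y$ over a range of length $(m_k^2-1)N_k$, and the whole proof that a proportion $\gamma_k>1-2^{-(m_k+2)}$ of concatenations pass it (condition (C), via (A)) is wired to this slow-growth regime. Replacing $m_k$ by an exponentially or doubly-exponentially large $R_k$ blows up both the range over which the test must hold and the estimates (A),(C),(b) governing $\gamma_k$; you would have to redo that analysis from scratch. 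Your Stirling estimate also needs care: the fraction of balanced concatenations among all $|\B_k|^{R_k}$ is roughly $(2\pi R_k/|\B_k|)^{-|\B_k|/2}$, which is polynomially small in $R_k$ only when $|\B_k|$ is essentially logarithmic in $R_k$; with $|\B_k|\approx N^{L_k}$ this forces $R_k$ to be doubly exponential in $L_k$, compounding the problem. Most importantly, you never show that a balanced concatenation exists that \emph{also} passes the correlation test, nor count how many do---this is exactly the ``careful bookkeeping'' you flag and defer, and it is the crux.

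The paper sidesteps all of this by decoupling the two constraints across scales. The correlation test is still applied at every step $k$ to concatenations of $m_k$ blocks of $\G'_{k-1}$ with $m_k$ small. The balancing constraint is not exact; it is an \emph{approximate} equidistribution requirement (the Bernstein's test (F)), applied only at a sparse subsequence of steps $k=K_{m(j)}$, and measured not with respect to $\G'_{k-1}$ but with respect to a much lower ``reference level'' $\G'_p$ with $p=m_k-M\ll k$. The ratio $q=N_k/N_p\ge m_p^{K_m-p}$ can then be driven as large as one likes by choosing $K_m$ large enough, without touching the slow-growth regime of $m_k$. Bernstein's inequality gives $\P(\text{F})<2N^n\exp(-2q\beta)$, and the entropy gap estimate \eqref{ent2} gives $\P(\bar\G_k)>\exp(-q\beta)$; dividing yields a conditional probability $\P(\text{F}\mid\bar\G_k)<2N^n\exp(-q\beta)$ of failing the new test \emph{given} passing the old one, which the tailored requirement (E) on $K_m$ forces to be dominated by $\alpha(m)\bigl((\tfrac{8.5}9)^{k-1}-(\tfrac89)^{k-1}\bigr)$. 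This is what propagates the inductive condition (C') and keeps all $\gamma'_k>\tfrac12$, so the entropy bound survives. Unique ergodicity then follows because, by (F), all blocks in $\G'_k$ give nearly identical empirical frequencies of short blocks, so $\diam\M(\Sigma'_k)\to0$ along $k=K_{m(j)}$; strict ergodicity is obtained at the end by passing to the unique minimal subsystem of the uniquely ergodic $\Sigma'$ (the paper does not need, and does not claim, minimality of $\Sigma'$ itself). In short: your exact-balance idea is not wrong in spirit, but without the approximate/reference-level decoupling and the conditional-probability bound, the existence and abundance of admissible balanced concatenations is left unproved, and it is precisely there that the real work of the theorem lives.
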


The motivation for the above refinement of the former result from \cite{DS} comes partly from the so-called strong MOMO property, introduced by Lema\'nczyk and coauthors in a recent paper \cite{AKLR}. For a topological dynamical system $(X,T)$ the strong MOMO property is a form of disjointness between the system and the M\"obius function. If the M\"obius function is replaced by a bounded \sq\ $y$, we are dealing with an analog of the strong MOMO property, which we will call ``strong $y$-MOMO'' (although puristically it should be ``strong $y$OMO''). It is important that the strong $y$-MOMO property implies uniform uncorrelation between $(X,T)$ and $y$. The authors of \cite{AKLR} prove that Sarnak's conjecture is equivalent to its version in which M\"obius disjointness is replaced by the strong MOMO property. It is also proved that if $y$ is generic for a Bernoulli measure then no system of positive entropy has the strong $y$-MOMO property. On the other hand, using a disjointess argument, one can show that every zero entropy system has the strong $y$-MOMO property. In this manner, the strong $y$-MOMO property remarkably allows to ``distinguish zero from positive entropy'' using just one ``test \sq''.  Our current result implies that uniform uncorrelation to just one ``test sequence'' $y$ does not allow to fully distinguish between zero and positive entropy, hence uniform uncorrelation is (at least when $y$ is generic for a Bernoulli measure) essentially weaker than the strong $y$-MOMO property. 

Existence of a strictly ergodic example is important in the following context: in \cite{CDS} we have shown that it is very unlikely that every strictly ergodic system is uncorrelated to the M\"obius function. For that, the measure generated by the M\"obius function would have to be uncorrelated to any ergodic measure. This is a very specific and rare property which fails, for instance, if the Chowla conjecture holds. So, it is believed that the M\"obius function correlates with many strictly ergodic systems, and under Sarnak's conjecture, all such systems have positive entropy. In this setup, there are a priori three possibilites: 
\begin{enumerate}
	\item All strictly ergodic systems with positive entropy correlate with the M\"obius function;
	\item Every ergodic system with positive entropy has a strictly ergodic model which correlates with the M\"obius function;
	\item There exists an ergodic system with positive entropy whose all ergodic models do not correlate with the M\"obius function.
\end{enumerate}

Our result eliminates the option (1) above. Whether (2) or (3) is true, remains at the moment an open question. The option (2) leaves another possibility open: perhaps every ergodic system has a strictly ergodic model which does not correlate with the M\"obius function (our guess is that this is not true).

\section{The proofs}
We need to start by briefly recalling some details of the construction of the subshift $\Sigma$ in Theorem \ref{thm1}. All involved subshifts are based on a fixed alphabet $\Lambda$ of cardinality $N\geq 2$. 
The desired $\Sigma$ is the intersection of a nested \sq\ of certain subshifts $\Sigma_k$. 
By definition, $\Sigma_k$ consists of all infinite concatenations (and their shifts) of blocks belonging to a family $\G_k\subset\Lambda^{N_{\!k}}$ (each element of $\G_k$ is a block of length $N_{\!k}$). Since $N_k$ tends to infinity, the \tl\ entropy of $\Sigma$ can be computed as the following 
(nonincreasing) limit:
$$
h(\Sigma)=\lim_kh(\Sigma_k) = \lim_k\tfrac1{N_{\!k}}\log(\#\G_k).
$$

The construction begins by setting $N_0=1$ and letting $\G_0=\Lambda$ so that $\Sigma_0$ is the full shift on $N$ symbols.
In the inductive step $k\ge 1$ we assume that the family $\G_{k-1}$ (and thus the subshift $\Sigma_{k-1}$) is already defined and we pass to constructing the family $\G_k$. In order to do so, we must fix several parameters, one of which is a positive integer $m_k$ (called the \emph{multiplier}) equal to the ratio $\frac{N_{\!k}}{N_{\!k-1}}$ (the family $\G_k$ consists of some carefully selected concatenations of $m_k$ blocks from $\G_{k-1}$). The initial value of the sequence $\{m_k\}$ 
is $m_1=M\geq 81$, subsequently the multipliers tend very slowly and nondecreasingly to infinity (the \sq\ $\{m_k\}$ has long intervals of constancy and infinitely many jumps up, each only by a unit). The first index $k$ for which $m_k$ assumes a given value $m\ge M$ is denoted by $K_m$ and called the $m$th \emph{jump index}. The jump indices form a rapidly growing \sq\ whose speed of growth is regulated by specific conditions. Since the rules give only lower bounds, within the same construction scheme we are free to impose any faster growth.

We also make use of two sequences of positive numbers, $\{\epsilon_k\}$ and $\{\delta_k\}$, both tending very slowly to zero (in this paper we will be dealing with only one \sq\ $\{\epsilon_k+\delta_k\}$). We note that in order to verify the uncorrelation between $\Sigma$ and $y$ it suffices to consider only some specific continuous functions $f:\Sigma\to\R$, namely the functions with values in $\{-1,1\}$, depending on finitely many nonnegative coordinates (we call such functions \emph{codes}). A convenient property of each code $f$ is that it can be applied not only to infinite \sq s $x\in\Lambda^\N$ but also to any sufficiently long block $B$ over the alphabet 
$\Lambda$, producing as an output a slightly shorter block $f(B)$ over $\{-1,1\}$ (assuming that $B$ is very long we will ignore the difference in lengths without further consequence). We represent the countable family of all codes as an increasing union of finite families $\F_k$ and in each inductive step $k$ we consider only the codes from $\F_k$.

By definition, the family $\G_k$ consists of all concatenations $B$ of $m_k$ blocks from $\G_{k-1}$ which pass the following \emph{correlation test}: 
\begin{enumerate}
\item[(R)]\label{R} for every $1\le j\le (m_k^2-1)N_{\!k}$ and every $f\in\F_k$, letting $C=y_j^{j+N_{\!k}-1}$ 
we have $|\overline{f(B)C}|<2(\epsilon_k+\delta_k)$. \label{dwa}
\end{enumerate}
Informally, we demand all images of $B$ under the codes from the finite family $\F_k$ to have small correlations with every block of $y$ of length $N_{\!k}$, ending before the position $m_k^2N_{\!k}$. 

\medskip
We denote by  $\gamma_k$  the ``probability of passing the correlation test'', i.e., 
the probability that a block $B\in(\G_{k-1})^{m_k}$ satisfies (R). Then the cardinality of the family $\G_k$ is given by:
$$
\#\G_k = (\#\G_{k-1})^{m_k}\gamma_k.
$$
A composition of the above, applied to $i$ ranging from $1$ to $k$ (recall that $m_i=\tfrac{N_i}{N_{i-1}}$), yields:
$$
\#\G_k = N^{N_{\!k}}\cdot\gamma_1^{\frac{N_{\!k}}{N_1}}\cdot\gamma_2^{\frac{N_{\!k}}{N_2}}\cdots
\gamma_{k-1}^{\frac{N_{\!k}}{N_{\!k-1}}}\cdot\gamma_k^{\frac{N_{\!k}}{N_{\!k}}},
$$
which allows us to evaluate the \tl\ entropy of $\Sigma$ as the limit
$$
h(\Sigma)= \lim_k\frac1{N_{\!k}}\log(\#\G_k)= \log N + \sum_{k=1}^\infty\frac{\log(\gamma_k)}{N_{\!k}}. 
$$
As long as all probabilities $\gamma_k$ are greater than or equal to $\frac12$ (which we later show to be true), we have the following lower estimate of the \tl\ entropy of $\Sigma$:
$$
h(\Sigma)\ge\log N -\log 2\cdot \sum_{k=1}^\infty\frac1{M^k}= \log N-\tfrac{\log2}{M-1}. 
$$
By choosing the initial multiplier $M$ large, the entropy $h(\Sigma)$ can be made as close to $\log N$ as we desire. In this work, we will make use of another immediate consequence of the above entropy formula---an upper bound on the difference between the \tl\ entropies of $\Sigma_p$ and $\Sigma_k$ for any $p<k$\,:
\begin{equation}\label{ent2}
h(\Sigma_p)-h(\Sigma_k)< \log 2\cdot\sum_{i=p+1}^\infty\frac1{N_i}\le \frac{\log 2}{N_p}\cdot\sum_{i=1}^\infty\frac1{(m_p)^i}
=\frac{\log2}{N_p(m_p-1)}.
\end{equation}
\medskip

We will now argue why the uncorrelation between $y$ and $\Sigma$ is uniform. 
\begin{fact}\label{fac}
The subshift $\Sigma$ is uniformly uncorrelated to $y$. 
\end{fact}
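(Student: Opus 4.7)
My plan is to observe that the uncorrelation argument from \cite{DS} already yields a bound independent of the point $x\in\Sigma$, because it passes through the pointwise condition (R) applied to the $\G_k$-decomposition of $x$. Since it suffices to verify uniform uncorrelation on codes, fix a code $f$ and choose $k_0$ with $f\in\F_{k_0}$; by the monotonicity of the families $\F_k$, $f\in\F_k$ for all $k\ge k_0$.

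For each $n$ I would pick a level $k=k(n)\ge k_0$ (to be specified below) and use $\Sigma\subset\Sigma_k$ to write an arbitrary $x\in\Sigma$ as the shift by some $\tau\in[0,N_{\!k})$ of a one-sided concatenation $B_1B_2\ldots$ with $B_j\in\G_k$. Splitting the average $\tfrac1n\sum_{i=1}^n f(T^ix)y_i$ along this partition, each full block $B_j$ contributes (up to an $O(L_f)$ boundary error from the dependence of $f$ on finitely many consecutive coordinates) the quantity $N_{\!k}\cdot\overline{f(B_j)C_j}$, where $C_j$ is the aligned window of $y$ of length $N_{\!k}$. Since the starting position of $C_j$ in $y$ lies in $[1,n]$, condition (R) gives $|\overline{f(B_j)C_j}|<2(\epsilon_k+\delta_k)$ as long as $n\le(m_k^2-1)N_{\!k}$. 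Absorbing the partial blocks at the ends, we obtain the uniform-in-$x$ estimate
\[
\Bigl|\tfrac1n\sum_{i=1}^n f(T^ix)y_i\Bigr|\le 2(\epsilon_k+\delta_k)+O(N_{\!k}/n)+O(L_f/N_{\!k}),
\]
whose implicit constants depend only on $f$.

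The main obstacle is to choose $k=k(n)$ so that all three terms become arbitrarily small while preserving the constraint $n\le(m_k^2-1)N_{\!k}$. I would take $k$ to be the largest integer with $m_kN_{\!k}\le n$. Then $n/N_{\!k}\ge m_k$, so $N_{\!k}/n\to 0$ as $k\to\infty$ (since $m_k\to\infty$), and the bound $m_{k+1}\le m_k+1$ yields $n<(m_k+1)^2 N_{\!k}$. Whenever $n/N_{\!k}\le m_k^2-1$ the level-$k$ estimate applies directly; in the narrow residual band $n/N_{\!k}\in(m_k^2-1,(m_k+1)^2)$ (which can only occur when $m_{k+1}=m_k+1$) one redoes the decomposition at level $k+1$, where $n/N_{\!k+1}\in[m_k-1,m_k+1]$ fits comfortably inside the range $[1,m_{k+1}^2-1]$ required by (R), and $N_{\!k+1}/n\le 1/(m_k-1)$. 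In either case, as $n\to\infty$ we have $k(n)\to\infty$ and hence $\epsilon_k+\delta_k\to 0$, $L_f/N_{\!k}\to 0$, and $N_{\!k}/n\to 0$, yielding the desired uniform bound.
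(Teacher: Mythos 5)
Your proposal is correct and takes essentially the same route as the paper: decompose $x_1^n$ into complete $\G_k$-blocks, apply the correlation test (R) to each, bound the boundary contribution, and observe that the resulting estimate depends only on $f$ and $n$, never on $x$; the only cosmetic difference is that the paper picks $k$ to be the \emph{smallest} integer with $n < m_k^2 N_{\!k}$, which makes (R) applicable immediately and, by minimality, gives $N_{\!k}/n < 1/(m_k-2)$, thereby avoiding your two-case analysis. One small inaccuracy: your parenthetical claim that the residual band $n/N_{\!k}\in(m_k^2-1,(m_k+1)^2)$ can only occur when $m_{k+1}=m_k+1$ is not right (when $m_{k+1}=m_k$ the band shrinks to $(m_k^2-1,m_k^2)$ but can still be nonempty), but this is harmless since you pass to level $k+1$ in that case anyway and the constraint of (R) is comfortably met there.
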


\begin{proof}
Of course, it suffices to test uniformity of the uncorrelation only on codes. To this end, we copy the proof of uncorrelation from \cite{DS} and we indicate the place where uniformity is implicit:

${<\!\!\!<}$Let $f$ be any $\{-1,1\}$-valued function depending on finitely many nonnegative coordinates. Fix some point $x\in\Sigma$ and pick $n\in\N$. Let $k$ be the smallest integer such that $n< m^2N_k$ (by convention $m$ abbreviates $m_k$). If $f$ is not in $\mathcal F_k$ then we simply must pick a larger $n$. So, we can assume that $f\in\mathcal F_k$. Now, $x\in\Sigma_k$, which means that $x_1^n$ is a concatenation of the blocks from $\G_k$, except that the first and last component blocks may be incomplete. The contribution of these parts in the length is at most $\frac{2N_k}n$, and since $n\ge m_{k-1}^2N_{k-1}\ge (m-1)^2N_{k-1}>(m-2)N_k$, this contribution is less than $\frac2{m-2}$, and such is also the maximal contribution of these parts in the evaluation of the correlation between $x_1^n$ and $y_1^n$. The rest of the correlation is the average of the correlations of the complete component blocks from $\G_k$ with their respective subblocks of length $N_k$ of $y$. Since all these subblocks end before the position $m^2N_k$, by (R), each of these correlations is less than $2(\epsilon_k+\delta_k)$ in absolute value. Jointly, the absolute value of the correlation of $x_1^n$ with $y_1^n$ does not exceed 
$$
\tfrac 2{m-2}\cdot1 + \tfrac{m-4}{m-2}\cdot2(\epsilon_k+\delta_k).{>\!\!\!>}
$$

Just notice that the above argument is completely independent of $x\in\Sigma$. The final estimate of the correlation depends exclusively on $f$ and $n$ ($f$ determines the lower bound for $n$, and $n$ determines $k$ and hence both $m$, $\epsilon_k$ and $\delta_k$).
\end{proof}

We pass to proving the main result of this paper.

\begin{proof}[Proof of Theorem \ref{thm2}] 
We shall indicate a modification of the construction of $\Sigma$, leading to a new subshift $\Sigma'$ which carries only one invariant measure (and maintains the other properties). Once this is done, obtaining a strictly ergodic (i.e., uniquely ergodic and minimal) subshift uncorrelated to $y$ is trivial: Every minimal subsystem of $\Sigma'$ carries an \im\ and since such measure is unique, the minimal subsystem is also unique (let us denote it by $\Sigma''$) and supports the same measure. Thus, $\Sigma''$ is the desired strictly ergodic subshift. From now on we will focus on constructing $\Sigma'$. The modification consists of two steps. The first one is in fact no modification at all, as we simply impose a faster growth of the \sq\ of jump indices $K_m$. The resulting subshift fits in the original scheme and will be still 
denoted by $\Sigma$. The second modification is more substantial: in the construction of $\Sigma$ we replace 
the families $\G_k$ by their proper subfamilies $\G'_k$. In this manner we create a subsystem $\Sigma'$ of $\Sigma$. 
Clearly, $\Sigma'$ remains uniformly uncorrelated to $y$. We will only need to verify that the entropy of $\Sigma'$ 
is close to $\log N$ and that $\Sigma'$ is strictly ergodic. 

\begin{comment}
In the original construction, at step $k$ the blocks in $\G_k$ are concatenations of $m_k$ blocks from $\G_{k-1}$. To be qualified as a member of $\G_k$, each concatenation must pass the ``correlation test'' (R), the ``probability of passing'' being $\gamma_k$. In the modified construction, among all step numbers $k$, we will choose a very sparse subsequence, and at each of those ``exceptional'' steps we will allow only these concatenations
which, apart from (R), satisfy an additional condition, defining therefore a sligthly smaller family $\G'_k\subset\G_k$, with a slightly smaller ``probability of passing'' $\gamma_k'<\gamma_k$. If the step $k$ is not one of the exceptional steps then
the new procedure coincides with the old one. However, since the families $\G_k$ are 
defined inductively, in fact all (but finitely many) families $\G_k$ will be replaced by their proper subfamilies $\G_k'$ (this would be true even if there was just one ``exceptional'' index $k$).  

The new families $\G_k'$ give rise to some new subshifts $\Sigma_k'\subset\Sigma_k$, and since, for each $k$, $\G_k'\subset\G_k$, the intersection $\Sigma'=\bigcap_k\Sigma_k$ is clearly a subsystem of the original subshift $\Sigma$. Thus $\Sigma'$ is obviously uniformly uncorrelated to the a priori given aperiodic sequence~$y$. 
\end{comment}

\medskip

We continue by recalling more details of the original construction in \cite{DS}. The speed of growth of $K_m$ is ruled by two conditions:
\begin{enumerate}
\item[(a)] Some technical condition which we will not use or change;
\item[(b)] $9\cdot\alpha(m)\cdot(\frac89)^{K_m-1}<\frac1{2^{m+2}}$, \ where
$\alpha(m)>0$ depends only on $m$. 
\end{enumerate}
\smallskip

For each step number $k$ we define the \emph{reference index} as $p_k=m_k-M$ (which is always smaller than $k$). The reference index grows with $k$ as slowly as $m_k$ does (remaining constant throughout many steps and jumping up only by a unit). Each block from $\G_k$ (which by definition is a concatenation of the blocks from $\G_{k-1}$) is clearly also a concatenation of (much shorter) blocks from $\G_{p_k}$. This fact will soon play an important role in our modified construction.

The proof of the main statement of \cite{DS} relies on the validity of the following lemma
(extracted from Lemma 3.2 in \cite{DS}):
\begin{lem} For every $k$ we have:
\begin{enumerate}
  \item[(A)] $\sum_{s=p_k+1}^{k-1}(1-\gamma_s)<\frac{\delta_k}2$,
  \item[(B)] a technical condition which we will not use or change,
	\item[(C)] $\gamma_k>1-\alpha(m_k)(\tfrac89)^{k-1}$.
\end{enumerate}
\end{lem}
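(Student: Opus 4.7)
The plan is to establish (C) by a concentration argument on random concatenations, and then to derive (A) from (C) by a geometric-series estimate keyed to the growth condition (b).

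For (C), I would place the uniform probability on $(\G_{k-1})^{m_k}$ and estimate the probability that a random concatenation $B=B_1B_2\cdots B_{m_k}$ fails the correlation test (R). Fix a code $f\in\F_k$ and a shift $j\in\{1,\ldots,(m_k^2-1)N_{\!k}\}$. Because $f$ depends on only finitely many coordinates, the image $f(B)$ splits along the $m_k$ component blocks up to a negligible boundary error, and so the correlation $\overline{f(B)\,y_j^{j+N_{\!k}-1}}$ is an average of $m_k$ essentially independent bounded quantities. Using aperiodicity of $y$ (together with (A) at step $k-1$) to bound the conditional mean of each summand by $\epsilon_k+\delta_k$ in absolute value, a Hoeffding/Azuma-type inequality gives
\[
\P\bigl(|\overline{f(B)\,y_j^{j+N_{\!k}-1}}|\ge 2(\epsilon_k+\delta_k)\bigr)\le 2\exp\bigl(-c\,m_k(\epsilon_k+\delta_k)^2\bigr)
\]
for a universal $c>0$. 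A union bound over the $|\F_k|(m_k^2-1)N_{\!k}$ test pairs $(f,j)$ then yields
\[
1-\gamma_k\le 2|\F_k|(m_k^2-1)N_{\!k}\exp\bigl(-c\,m_k(\epsilon_k+\delta_k)^2\bigr).
\]
Since we are free to impose faster growth of the jump indices $K_m$ (hence of $N_{\!k}$), this right-hand side can be made less than $\alpha(m_k)(8/9)^{k-1}$, where $\alpha(m_k)$ absorbs the combinatorial factors depending only on $m_k$.

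For (A), I plug (C) into the sum and split the range $(p_k,k-1]$ according to the value of $m_s$, which is constant on each interval $[K_m,K_{m+1}-1]$. On such an interval the summands form a geometric series with ratio $8/9$, so by condition (b),
\[
\sum_{s=K_m}^{K_{m+1}-1}\alpha(m)\bigl(\tfrac89\bigr)^{s-1}\le 9\,\alpha(m)\bigl(\tfrac89\bigr)^{K_m-1}<\frac1{2^{m+2}}.
\]
Summing over the finitely many values of $m_s$ appearing for $s\in(p_k,k)$---ranging from $m_{p_k+1}$ up to $m_k$---yields a tail bounded by $\sum_{m\ge m_{p_k+1}}2^{-(m+2)}=2^{-(m_{p_k+1}+1)}$, which tends to $0$ as $k\to\infty$ since $m_{p_k+1}\to\infty$. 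Arranging the decay of $\{\delta_k\}$ to dominate $2^{-m_{p_k+1}}$ (a standing freedom we have in the construction) gives $\sum_{s=p_k+1}^{k-1}(1-\gamma_s)<\delta_k/2$.

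The main obstacle is the concentration step in (C). The delicate point is that the blocks in $\G_{k-1}$ are not uniformly distributed in $\Lambda^{N_{\!k-1}}$ but have already been filtered through the correlation test at stage $k-1$; one must verify that the induced conditional law retains enough independence across the $m_k$ component positions for Hoeffding to apply, and that the aperiodicity of $y$ can be invoked uniformly over all test pairs $(f,j)$ to bound the conditional means. Once this is done, choosing the jump indices so that (b) holds and the union bound in (C) is effective, and then propagating the estimate into (A), is a bookkeeping matter within the scheme.
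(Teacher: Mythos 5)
The lemma you are asked to prove is in fact not proved in the present paper; it is quoted from Lemma 3.2 of \cite{DS}, and what this paper records is only the logical dependency structure: condition (b) together with the inductive hypothesis that (C) holds for all $s<k$ yields (A) at step $k$; (A) yields the (unused) technical condition (B); and (A) together with (B) at the \emph{same} step $k$ yields (C) at step $k$. The paper leans repeatedly on the implication (A) $\Rightarrow$ (C), e.g.\ when passing from (A') to the estimate for $\bar\gamma_k$ in the proof of Theorem~\ref{thm2}.

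Your derivation of (A) from (C) matches the first link of that chain and is essentially correct: split the sum over $s\in(p_k,k-1]$ into blocks on which $m_s$ is constant, bound each by the geometric tail $9\alpha(m)(\tfrac89)^{K_m-1}$, invoke (b), and sum the resulting $2^{-(m+2)}$ over $m\ge m_{p_k+1}$. One loose end: you still have to show $2^{-(m_{p_k+1}+1)}<\delta_k/2$, and since $\delta_k$ already appears in the correlation test (R) that tolerance cannot simply be shrunk post hoc; it has to be built into the initial choice of $\{\delta_k\}$ relative to the jump indices.

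The real gap is in (C). You propose a free-standing Hoeffding-type concentration over the $m_k$ component blocks, bounding the conditional mean of each summand by ``aperiodicity of $y$ together with (A) at step $k-1$.'' But this is precisely where condition (B) enters: (B) is the intermediate statement controlling the \emph{expected} correlations of blocks from the already-filtered ensemble $\G_{k-1}$ against shifted windows of $y$, and (C) is derived from (A) \emph{and} (B) at the same index $k$. Aperiodicity of $y$ by itself gives only asymptotic zero averages along arithmetic progressions, not the uniform finite-window mean bounds your concentration step needs; and ``(A) at step $k-1$'' is the wrong object---(A) at step $k$ already ranges over all $s\in(p_k,k-1]$ and is what tracks the cumulative loss from filtering. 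As written, your proof of (C) is not self-contained, and structurally it does not reproduce the implication (A) $\Rightarrow$ (C) that the remainder of the paper requires. A small directional error besides: imposing faster growth of the jump indices $K_m$ makes $m_k$ increase \emph{more slowly}, hence $N_k=\prod_{i\le k}m_i$ grows more slowly as well, so the parenthetical ``(hence of $N_k$)'' in your union-bound discussion points the wrong way.
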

Note also that {\rm(C)} combined with {\rm(b)} and with the obvious fact that $k\ge K_{m_k}$ guarantees that $\gamma_k>1-2^{-(m_k+2)}$ (so $\gamma_k$ is much larger than $\frac12$). The proof of the above lemma starts with showing (A) using the condition (b) and the inductively assumed, for all $s<k$, condition (C). The proof of the condition (B) depends only on (A), and then the proof of (C) (the version for $\gamma_k$) relies on (A) and (B). Since we do not invoke (B), it is essential for us that (C) follows from (A). This implication relies on the particular design of the correlation test (R).

\medskip
We now introduce the first innovation in the construction---a slightly sharper requirement on how large $K_m$ must be. Namely, instead of (b) we demand that
\begin{equation}
9\cdot\alpha(m)\cdot(\tfrac{8.5}9)^{K_m-1}<\tfrac1{2^{m+2}}.\tag{b'}
\end{equation}
Let us discuss some consequences of this modification. Suppose that for all $s<k$ we replace the families $\G_s$ by their subfamilies $\G'_s$ in such a way that the corresponding probabilities $\gamma_s'=\frac{\#\G'_s}{(\#\G'_{s-1})^{m_s}}$ instead of (C) (for $s$) satisfy a slightly weaker version:
\begin{equation}
\gamma'_s>1-\alpha(m_s)(\tfrac{8.5}9)^{s-1}.\tag{C'}
\end{equation}
Then, as easily verified, our sharpened condition (b') allows to prove a version of (A) 
with these new probabilities replacing the old ones, as follows:
\begin{equation}
\sum_{s=p_k+1}^{k-1}(1-\gamma'_s)<\tfrac{\delta_k}2.\tag{A'}
\end{equation}
Now, if we denote by $\bar\gamma_k$ the probability that a concatenation from $(\G'_{k-1})^{m_k}$ passes the correlation test (R), then, by the same proof as that of (A)$\implies$(C), (A') implies (C) for $\bar\gamma_k$, i.e.,  
\begin{equation}
\bar\gamma_k>1-\alpha(m_k)(\tfrac89)^{k-1}.\tag{$\bar{\text C}$}
\end{equation} (The reason why we do not denote $\bar\gamma_k$ by $\gamma'_k$ will become clear later.)

\medskip

We need to impose one more requirement on the growth of the jump indices $K_m$. 
First we fix a decreasing to zero \sq\ of positive numbers $\{r(j)\}_{j\ge1}$, and for each $j$ we find (by referring to the comment following \eqref{metr}) $n(j)\in\N$ and $\theta(j)>0$ such that if two \im s $\mu$ and $\nu$ satisfy 
$$
|\mu(D)-\nu(D)|<\theta(j)
$$
for all blocks (identified with cylinders) $D$ of length $n(j)$ then we have $d(\mu,\nu)<r(j)$. Now, for each $j$ we set 
$$
\beta(j)=\frac{(\theta(j))^2}{128}
$$
and we find the smallest integer $p(j)$ such that 
\begin{equation}\label{ent1}
\frac{\log 2}{m_{p(j)}-1}<\beta(j) \text{ \ \ and \ \ } \frac{n(j)}{N_{p(j)}}<\frac{\theta(j)}4.
\end{equation}
The new requirement (on the largeness of $K_m$) which we are about to force applies only to indices $m=m(j)=p(j)+M$, $j\ge 1$. Since throughout this and the following paragraph $j$ remains fixed, we will skip ``$(j)$'' in the denotation of $m(j)$, $n(j)$, $p(j)$, etc. Note that between steps $p$ and $K_m$, the multiplier is at least $m_p$ thus the ratio $\frac{N_{K_m}}{N_p}$ will be larger than or equal to $m_p^{K_m-p}$. Finally, recall that $N$ stands for the cardinality of the alphabet~$\Lambda$.

Here is the requirement: we demand that $K_m$ is so large that
\begin{equation}
2N^{n}\exp(-\beta m_p^{K_m-p})
	<\alpha(m)\bigl((\tfrac{8.5}9)^{K_m-1}-(\tfrac89)^{K_m-1}\bigr).\tag{E}
\end{equation}
This is clearly satisfied if $K_m$ is large enough, because the left hand side decreases (with growing $K_m$) to zero with doubly exponential speed, while the right hand side does it only exponentially fast. We remark, that since the values of $m_p$ and $m$ depend on 
jump steps much smaller than $K_m$, this inductive definition of the sequence $\{K_m\}$ is correct.

\medskip
With the above choice of the jump steps, the otherwise unmodified scheme leads to a \sq\ of families $\G_k$, a \sq\ of subshifts $\Sigma_k$ and a subshift $\Sigma$ which has entropy larger than $\log N - \frac{\log 2}{M-1}$ and is uniformly uncorrelated to $y$.
We will now present the second, more substantial, modification of the inductive construction through which we create subfamilies $\G'_k$ of $\G_k$, subsystems $\Sigma_k'$ of $\Sigma_k$ and the desired subsystem $\Sigma'$ of $\Sigma$. The modification consists in applying, 
in addition to the correlation test (R), a new test, which we will call the Bernstein's test (described below). The new test will be applied only at steps whose indices $k$ have the form $K_{m(j)}$, $j\ge 1$. Nevertheless, the modification defines (indirectly) also the subfamilies $\G'_k\subset\G_k$ for other indices $k$.
 
We start by not changing $\G_s$ (i.e., letting $\G'_s=\G_s$) for all $s<K_{m(1)}$. 
Suppose that for some $j\ge 1$ we have defined the subfamilies $\G'_s\subset\G_s$ for all $s<K_{m(j)}$, and that the corresponding probabilities $\gamma'_s=\frac{\#\G'_s}{(\#\G'_{s-1})^{m_s}}$ satisfy the inequality (C'). Since $j$ is now fixed we return to our previous notational convention. Moreover, we will abbreviate $K_m$ as $k$ (and so $m=m_k$). The goal is now to define $\G'_k\subset\G_k$.

At this point we create a temporary family $\bar\G_k$ by applying to the concatenations from $(\G'_{k-1})^m$ just the correlation test (R). By the discussion on the preceding page, the probability $\bar\gamma_k$ of passing the test satisfies the condition ($\bar{\text C}$). For brevity we denote by $q$ the ratio $\frac{N_k}{N_p}$.

For a fixed block $D\in\Lambda^{n}$, let us define a random variable $\mathsf X_D$ on $\G'_p$ (note that in step $k$, $\G'_p$ is already defined)
by setting $\mathsf X_D(b)=\mathsf{freq}_b(D),~b\in\G'_p$, and let $\bar X_D$ denote the expected value of $X_D$.

Now we create the target family $\G'_k$ by discarding from $\bar\G_k$ all blocks $B$ 
which fail the \emph{Bernstein's test}, i.e., have the following property (since $\bar \G_k\subset(\G'_p)^q$ we can formulate the condition for all blocks in $(\G'_p)^q$):

\begin{enumerate}
	\item[(F)]  A block $B\in(\G'_p)^q$,\ $B=b_1b_2\dots b_q,\ b_i\in\G'_p, i=1,\ldots q$,~ fails the Bernstein's test if
	for at least one block $D\in\Lambda^{n}$, we have
	$$
	\Bigl|\frac1q\sum_{i=1}^q \mathsf X_D(b_i)-\bar{\mathsf X}_D\Bigr|>\sqrt{8\beta}.
	$$
\end{enumerate}

This completes the definition of $\G'_k$ for $k=K_{m(j)}$. 
We will show in a moment that the corresponding probability $\gamma_k'=\frac{\#\G'_k}{(\#\G'_{k-1})^m}$ satisfies (C') (the version for $k$). 
For indices $k$ strictly between $K_{m(j)}$ and $K_{m(j+1)}$ the families $\G'_k$ are created according to the original scheme, i.e., using the correlation test only.
By the same argument as used above for $\bar\gamma_k$, the resulting probabilities $\gamma'_k$ satisfy (C) (and hence (C') for $k$), so that the construction can be continued for $k=K_{m(j+1)}$.

Modulo the missing proof of (C') for $k=K_{m(j)}$, the families $\G'_k$ and the resulting subshift $\Sigma'$ are now determined. Once (C') is proved, we will also know that all the probabilites $\gamma'_k$ are larger than $\frac12$, which will in turn imply that
the subshift $\Sigma'$ has \tl\ entropy at least $\log N-\frac{\log 2}{M-1}$.
In the end we will also need to verify that $\Sigma'$ is strictly ergodic.
\medskip

We pass to proving the missing condition (C') for $k=K_{m(j)}$. Since $j$ is now fixed,  we apply again our notational convention in which ``$(j)$'' is skipped and $K_m=k$.
The classical Bernstein's inequality (see e.g. \cite{B}) 
implies that in the space $(\G'_p)^q$ the probability of failing the Bernstein's test is smaller than $2N^{n}\exp(-2q\beta)$. We can express this fact as follows:
\begin{equation}\label{pro1}
\P(\text{F})<2N^{n}\exp(-2q\beta),
\end{equation} 
where $\P$ denotes the normalized counting measure on $(\G'_p)^q$ and F symbolizes 
the event of failing the Bernstein's test.

Note that, since all the new probabilities $\gamma'_s$ and $\bar\gamma_k$ are larger than $\frac12$, the inequality \eqref{ent2} is valid for $\Sigma'_p$ and $\bar\Sigma_k$:
$$
h(\Sigma'_p)-h(\bar\Sigma_k)<\frac{\log2}{N_p(m_p-1)}.
$$
By the definition of $p=p(j)$, the right hand side above is less than $\frac\beta{N_p}$. We have proved that 
$$
\tfrac1{N_k}\log\#\bar\G_k>\tfrac1{N_p}\log\#\G'_p-\tfrac\beta{N_p},
$$
which implies that
$$
\#\bar\G_k > (\#\G'_p)^{q}\exp(-q\beta).
$$
In other words, the probability that a randomly chosen concatenation $B\in(\G'_p)^q$ belongs to $\bar\G_k$ is larger than $\exp(-q\beta)$. 
Let us note this fact as follows:
\begin{equation}\label{pro2}
\P(\bar\G_k)>\exp(-q\beta).
\end{equation} 
Dividing the right hand side of \eqref{pro1} by the right hand side of \eqref{pro2} we obtain an upper estimate on the conditional probability in $\bar\G_k$ of failing the Bernstein's test:
$$
\P(\text{F}|\bar\G_k)< 2N^{n}\exp(-q\beta).
$$
Since $q\ge m_p^{k-p}$, the right hand side above is dominated by the left hand side of the inequality (E) (in which $K_m$ is written as $k$), hence we obtain:
$$
\P(\text{F}|\bar\G_k)< \alpha(m)\bigl((\tfrac{8.5}9)^{k-1}-(\tfrac89)^{k-1}\bigr).
$$

So, the new probability $\gamma_k'$ that a block from $(\G'_{k-1})^m$ passes both tests (and thus belongs in $\G'_k$) equals
\begin{multline*}
\gamma'_k=\bar\gamma_k\cdot(1-\P(\text{F}|\bar\G_k))>\\
(1-\alpha(m)(\tfrac89)^{k-1})\Bigl(1-\alpha(m)\bigl((\tfrac{8.5}9)^{k-1}-(\tfrac89)^{k-1}\bigr)\Bigr)>\\ 1-\alpha(m)(\tfrac{8.5}9)^{k-1}.
\end{multline*} 
We have proved that (C') holds for $k$, as needed.
\medskip

It remains to show strict ergodicity of $\Sigma'$. Since the sets $\M(\Sigma'_k)$ 
form a nested \sq\ of compact subsets of $\M(\Lambda^\N)$ with intersection equal to $\M(\Sigma')$ it suffices to prove that the diameters of $\M(\Sigma'_k)$ tend to zero along some subsequence. Once again, we fix some index $j$ and return to our notational convention
as above. Recall that if two \im s $\mu$ and $\nu$ satisfy 
$$
|\mu(D)-\nu(D)|<\theta
$$
for all blocks $D$ of length $n$, then we have $d(\mu,\nu)<r$. 
Let us call blocks of length $n$ \emph{short}, the elements of $\G'_p$ \emph{medium blocks} (these have length $N_p$), and the concatenations belonging to $\G'_k\subset (\G'_p)^q$ \emph{long blocks} (as before, $q=\frac{N_k}{N_p}$).

If $D$ is a short block and $B$ is a long block, we can approximate the frequency of 
occurrence of $D$ in $B$ as the weighted average of its frequencies in the medium blocks $b\in\G'_p$, as follows:
$$
\mathsf{freq}_B(D)=\Bigl[\sum_{b\in\G'_p}\mathfrak{freq}_B(b)\cdot\mathsf{freq}_b(D)\Bigr]\pm\tfrac n{N_p}.
$$
The term $\mathfrak{freq}_B(b)$ accounts only ``regular'' occurrences of $b$ in $B$, i.e., the occurrences of $b$ as components in the concatenation of elements of $\G'_p$ which constitute $B$. The symbol $\pm\tfrac n{N_p}$ stands for an error term whose absolute value does not exceed $\tfrac n{N_p}$ (and thus $\frac\theta4$). It is needed to account the possible occurrences of $D$ covered by two components of this concatenation. 
The sum in square brackets equals $\frac1q\sum_{i=1}^q \mathsf X_D(b_i)$ (using the notation introduced in the paragraph preceding (F)). 
Since $B$ belongs to $\G_k'$ and hence passes the Bernstein's test, 
this average differs from $\bar X_D$ by less than $\sqrt{8\beta}$ which equals $\frac{\theta}{4}$. 
Eventually, we have
$$
\mathsf{freq}_B(D)=\bar{\mathsf X}_D\pm\tfrac{\theta}{2}.
$$
This clearly implies that the frequencies of $D$ in any two blocks $B,B'\in\G'_k$ differ from each-other by at most $\theta$. This property applies also to all elements $x\in\Sigma'_k$: for any $x,x'\in\Sigma'_k$ the lower density of occurrence of $D$ in $x$ 
and the upper density of occurrence of $D$ in $x'$ differ by at most $\theta$ (the error term $\pm\frac n{N_p}$ absorbs also these occurrences of $D$ in $x\in\Sigma'_k$ which are covered by two components of the infinite concatenation of the blocks $B\in\G'_k$ which constitute $x$). This easily implies that $|\mu(D)-\nu(D)|<\theta$ for any $\mu,\nu\in\M(\Sigma'_k)$. By the choice of $n$ and $\theta$ we obtain that the diameter of $\M(\Sigma'_k)$ is at most $r=r(j)$. Since the \sq\ $\{r(j)\}$ tends to zero, we have shown that the diameters of $\M(\Sigma'_k)$ tend to zero along the subsequence indexed by $k=K_{m(j)}$, $j\ge 1$. This ends the proof.
\end{proof}


\begin{thebibliography}{99OOO}

\bibitem[AKLR]{AKLR}
H. El Abdalaoui, J. Ku\l aga-Przymus, M. Lema\'nczyk, T. de la Rue,
\emph{M\"obius disjointness for models of an ergodic system and beyond},
Israel Journal of Mathematics {\bf 228} (2018), 707--751.


\bibitem[B]{B} S.N. Bernstein, \emph{Probability Theory (in Russian)}, Gosizdat, Moscow, 1927. 

\bibitem[CDS]{CDS} J.-P. Conze, T. Downarowicz, and J. Serafin, \emph{Correlation of sequences and of measures, generic points for joinings and ergodicity of certain cocycles}, Trans. Amer. Math. Soc. {\bf 369} (2017), 3421--3441.

\bibitem[DS]{DS} T. Downarowicz and J. Serafin,
\emph{Almost full entropy subshifts uncorrelated to the M\"obius function},
International Mathematics Research Notices (2017), https://doi.org/10.1093/imrn/rnx192


\bibitem[S]{sarnak} P. Sarnak,  
\emph{Three lectures on the M\"obius function randomness and dynamics,}
http://publications.ias.edu/sites/default/files/MobiusFunctionsLectures(2).pdf.


\end{thebibliography}
\end{document}